\documentclass{amsart}

\usepackage[english]{babel}

\usepackage{a4wide}
\usepackage{amsmath}
\usepackage{graphicx}
\usepackage[colorlinks=true, allcolors=blue]{hyperref}

\def\A{\mathcal{A}}

\def\dt{\textrm{d}t}
\def\d{\textrm{d}}

\newcommand{\norm}[1]{\left\lvert#1\right\rvert}

\newcommand{\R}{\mathbb{R}}                  
\newcommand{\Z}{\mathbb{Z}}

\theoremstyle{plain}
\newtheorem{theorem}{Theorem}[section]

\newtheorem{proposition}{Proposition}[section]
\newtheorem{corollary}{Corollary}[section]

\theoremstyle{definition}

\theoremstyle{remark}
\newtheorem{remark}{Remark}[section]

\numberwithin{equation}{section}

\title{A generalization of Gordon's Theorem to arbitrary winding number}
\author{Marco Fenucci}

\address{Marco Fenucci \newline
Planetary Defence Office / NEO Coordination Centre, ESA ESRIN, Largo Galileo Galilei 1, Frascati, 00044, RM, Italy}
\email{marco.fenucci@ext.esa.int}

\begin{document}

\begin{abstract}
We study minimizers of the Keplerian action functional with homogeneous potential of type $1/|u|^\alpha$ on periodic loops with fixed winding number $k \in \Z\setminus\{0\}$. This extends the classical theorems of Gordon and Venturelli, closing the previously open case $\alpha \in (1,2)$, $|k|\geq 2$. We show that the $k$-circular solution loses local minimality below the threshold $\alpha = 2-1/|k|^2$, and that minimizers are collision-free above $\alpha = 2-1/|k|$. Combining these results with the existence of precessing periodic orbits in between, we obtain a full classification of the minimizer in terms of $\alpha$ and $k$, which we also verify numerically for $k=2$.
\vskip0.1truecm
\noindent
\textbf{Keywords:} Keplerian action, homogeneous potential, Gordon's theorem, collision-ejection solutions, conjugate points \\
\textbf{MSC 2020 classification codes:} 70F16 -- 70G75 -- 34C25 
\end{abstract}
\maketitle

\section{Introduction}
Fixed $T>0, \alpha >0$, we define the Lagrangian action of the Kepler problem with $\alpha$-homogeneous potential as 
\begin{equation}
    \A^{\alpha}(u) = \int_0^T \bigg( \frac{1}{2}\norm{\dot{u}}^2 + \frac{1}{\norm{u}^\alpha}\bigg) \dt,
    \label{eq:action}
\end{equation}
where $u:[0,T] \to \R^2$ is a periodic loop of period $T$ in $H^1(\R, \R^2 \setminus\{0\}])$. Given an integer $k \in \Z \setminus\{0\}$, we define the open set
\begin{equation}
    \Omega_k = \{ H^1(\R, \R^2 \setminus \{0\}): u(0) = u(T), \ \deg(u, 0)=k \}.
    \label{eq:Omega_k}
\end{equation}
The problem of Gordon \cite{gordon_1977} consists in finding the minimizers of the action functional $\A^\alpha$ on $\Omega_k$. It is known that the functional of Eq.~\eqref{eq:action} is lower semi-continuous and coercive on $\Omega_k$, therefore by Tonelli's Theorem there exists a minimum on the closure $\overline{\Omega}_k$.

In his paper, Gordon defined two regimes determined by the value of $\alpha$: the weak force case ($0<\alpha<2$), and the strong force case ($\alpha \geq 2$). In the strong force case, collision solutions produce an infinite action, therefore the minimum exists in $\Omega_k$ for all $k$, and the minimizers are smooth and collision free. On the other hand, collision solutions in the weak force case produce a finite action, and therefore they compete to be minimizers. Note that the case classical Keplerian case $\alpha=1$ falls within the weak force case. Gordon proved the following theorem \cite{gordon_1977}.
\begin{theorem}[Gordon]
    If $k=\pm 1$, then the minimizers of $\A^1$ are all the Keplerian solutions, including the collision-ejection solution. If $k\neq \pm 1$, the minimizers of $\A^1$ are only the collision-ejection solutions.
\end{theorem}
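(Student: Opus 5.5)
The plan is to compare the action of an arbitrary loop in $\overline{\Omega}_k$ with the action of Keplerian solutions through a lower bound that depends only on the winding number. The key tool is a reduction to the time-averaged kinetic and potential terms via homogeneity, combined with an estimate of the kinetic energy in terms of angular motion. First, I compute the minimal action among Keplerian orbits of period $T$. For $\alpha=1$, every Kepler ellipse of period $T$ has the same semi-major axis $a$ (by Kepler's third law), and the action depends only on $a$ and $T$. Indeed, along a Kepler solution with energy $E=-1/(2a)$, we have $\A^1 = \int (\tfrac12|\dot u|^2+1/|u|)\,\dt = \int (E + 2/|u|)\,\dt$, and $\int_0^T \dt/|u| = T/a$ by the virial identity (time average of $1/r$ equals $1/a$). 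Hence all Keplerian solutions of period $T$, including the degenerate collision-ejection one with eccentricity $1$, have the same action $\A_1(T) = \tfrac{3}{2}(2\pi)^{2/3}T^{1/3}$, up to the normalization of constants. A $k$-fold circular or elliptic Kepler orbit traversed $|k|$ times in time $T$ consists of $|k|$ laps of period $T/|k|$. Its action is $|k|\,\A_1(T/|k|) = |k|^{2/3}\A_1(T)$, which is strictly larger than $\A_1(T)$ when $|k|\ge 2$.

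Second, I prove the lower bound $\A^1(u)\ge \A_1(T)$ for every $u\in\overline{\Omega}_k$, $k\neq 0$. I write $u=re^{i\theta}$, so that $\A^1(u)=\int \tfrac12(\dot r^2+r^2\dot\theta^2)+1/r$. The approach I would take is Gordon's: minimize first over all loops in $\overline{\Omega}_{\pm1}$ (or rather over all non-contractible loops). Here I use the fact that the minimizer over $\overline\Omega_k$ exists by Tonelli, and that it is either collision-free, in which case it solves the Kepler equation and is a Kepler orbit of winding $k$, or it has collisions. In the collision case, each arc between collisions is a Kepler collision arc, and I argue that the arcs can be concatenated and rescaled. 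By a Sundman-type convexity argument, a loop split into $m$ arcs of durations $T_i$ has action at least $\sum_i c\,T_i^{1/3}\ge c\,T^{1/3}$, with equality only for a single arc. Since a collision loop's winding is not defined, its action is compared with the single collision-ejection orbit, which is $\A_1(T)$. For collision-free critical points, the only candidates are Kepler orbits, and those with winding $k$ are $|k|$-fold orbits with action $|k|^{2/3}\A_1(T)$.

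Third, I conclude. If $k=\pm1$, the bound is attained by every Kepler ellipse of period $T$ and by the collision-ejection solution, so these are the minimizers. If $|k|\ge2$, collision-free critical points cost $|k|^{2/3}\A_1(T)>\A_1(T)$, while the collision-ejection loop, which lies in $\overline\Omega_k$ as a limit of loops of any winding, attains $\A_1(T)$. Hence only collision-ejection solutions minimize.

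The main obstacle is the collision case: showing rigorously that a minimizer with collisions is a collision-ejection Kepler arc and has action at least $\A_1(T)$. I would first try to handle it through the subadditivity $T^{1/3}$ estimate on the arcs between collisions. I would also check that each arc between collisions, with endpoints at the origin, has action at least that of the radial collision-ejection solution of the same duration. To do this, I would drop the angular term $r^2\dot\theta^2$ and solve the 1D radial minimization explicitly.
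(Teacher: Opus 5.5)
The paper contains no proof of this statement. Gordon's theorem is quoted from \cite{gordon_1977} as background, so there is no in-paper argument to compare with. Judged on its own, your outline is correct and follows the classical route. A minimizer exists on $\overline{\Omega}_k$. If it avoids the origin, it is a critical point in the open set $\Omega_k$, hence a Kepler ellipse of minimal period $T/|k|$ run $|k|$ times, with action $|k|^{2/3}\A_1(T)$ by your virial computation. If it collides, its action is at least $\A_1(T)$, with equality only for the collision-ejection solution. This is also the template the paper uses for $\alpha\in(1,2)$ in Theorem~\ref{thm:gordon_general}: classify the collision-free critical points and compare them with the collision-ejection solution. At $\alpha=1$ the comparison is explicit because all period-$T$ Kepler orbits have the same action, so no precession-angle machinery is needed.

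Your collision step can be made cleaner and fully rigorous. Cutting the loop into $m$ arcs assumes finitely many collisions; the collision set has measure zero (since $\int 1/\norm{u}<\infty$) but need not be finite a priori. Instead, shift time so that $u(0)=u(T)=0$ and use $\norm{\dot u}\ge\bigl|\tfrac{\d}{\dt}\norm{u}\bigr|$ on the whole loop. This reduces everything to minimizing $\int_0^T\bigl(\tfrac12\dot r^2+1/r\bigr)\dt$ over $r\ge0$ with $r(0)=r(T)=0$. In this one-dimensional problem:
\begin{itemize}
\item a minimizer exists, and variations supported in $\{r>0\}$ are trivially admissible;
\item so on each of its countably many positivity intervals it solves $\ddot r=-r^{-2}$ with zero boundary values;
\item each such arc is exactly one period of the degenerate ellipse of period $T_i$, with action $\A_1(T_i)$.
\end{itemize}
Since $\sum_i T_i=T$, concavity of $t\mapsto t^{1/3}$ gives $\sum_i\A_1(T_i)\ge\A_1(T)$, strictly unless there is a single interval. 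This is all your ``Sundman-type'' step amounts to. This route also avoids checking that compactly supported perturbations of a colliding minimizer stay in $\overline{\Omega}_k$, which your arc-by-arc Euler--Lagrange argument tacitly uses.

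Finally, state the equality case explicitly: equality forces $\dot\theta=0$ a.e.\ and a single positivity interval. That is precisely the uniqueness of the collision-ejection solution you need, both for $|k|\ge2$ and for the colliding alternative when $k=\pm1$.
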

\noindent This result was further generalized by Venturelli \cite{venturelli-2002}, who proved the following theorem.
\begin{theorem}[Venturelli]
    For $\alpha \in (0,1)$, the minimizers of $\A^{\alpha}$ on $\overline{\Omega}_k$ are the collision-ejection solutions for every $k \in \Z \setminus \{0\}$. For $\alpha \in (1,2)$ and $k = \pm 1$, the minimizers of $\A^{\alpha}$ are the circular solutions. 
\end{theorem}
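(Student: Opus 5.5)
The plan is to treat the two regimes separately, reducing each to a comparison of explicit actions. The main tool in both is the scaling behaviour of $\A^\alpha$ under the Kepler symmetry together with a lower bound obtained by splitting each loop into arcs.

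\textbf{The case $\alpha\in(0,1)$.} First I compute the action of the collision-ejection solution with period $T$, traversing a segment back and forth. It is the minimum of the action among loops from the origin to itself, parametrized over $[0,T]$. By homogeneity it equals $c_\alpha T^{(2-\alpha)/(2+\alpha)}$.

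Next I take any $u\in\Omega_k$ and pass to polar coordinates $u=re^{i\theta}$, so that
\[
\A^\alpha(u)=\int_0^T\Big(\tfrac12\dot r^2+\tfrac12 r^2\dot\theta^2+r^{-\alpha}\Big)\dt \ \ge\ \int_0^T\Big(\tfrac12\dot r^2+r^{-\alpha}\Big)\dt .
\]
The angular term is strictly positive unless the motion is radial. A purely radial periodic loop that avoids $0$ has degree $0$, so I cannot drop the angular term naively. Instead I compare with the one-dimensional problem through a Sundman/Levi-Civita-type reparametrization, or through Gordon's original convexity argument.

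Concretely, I would prove the inequality $\A^\alpha(u)\ge c_\alpha T^{(2-\alpha)/(2+\alpha)}$ for all $u\in\overline\Omega_k$. To do so, I split the period into the $|k|$ sub-intervals on which $\theta$ advances by $2\pi$. The action over a sub-interval of length $T_j$ is at least the minimal action of a Kepler arc with angular excursion $2\pi$, which scales like $T_j^{(2-\alpha)/(2+\alpha)}$. Since $(2-\alpha)/(2+\alpha)<1$, the map $t\mapsto t^{(2-\alpha)/(2+\alpha)}$ is concave and subadditive. Hence a single "concentrated" piece is cheapest, and the sum of the pieces dominates the single-period collision value.

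The key fact needed is that, for $\alpha<1$, the collision-ejection action is below the action of the circular orbit of winding $1$ with the same period. I would check this by explicit computation via the Euler–Lagrange equations: $r^{2+\alpha}\omega^2=\alpha$ with $\omega=2\pi/T$. The check reduces to an inequality between $\Gamma$-function expressions that holds exactly when $\alpha<1$. Equality at $\alpha=1$ recovers Gordon's theorem.

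\textbf{The case $\alpha\in(1,2)$, $k=\pm1$.} Here the comparison goes the other way: the circular orbit beats the collision-ejection solution. That same explicit inequality flipped, plus the arc argument, rules out collisions in minimizers.

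It then remains to show that, among collision-free loops of degree $\pm1$, the circle minimizes. The natural tool is a rearrangement or convexity argument. Writing $\theta$ as a function with $\int\dot\theta=2\pi$, Jensen's inequality applied to $\int r^2\dot\theta^2$ under the constraint $\int\dot\theta = 2\pi$ should push towards constant angular velocity. Convexity of $r\mapsto r^{-\alpha}$ combined with the scaling then pushes towards constant $r$. The other Kepler-type (non-circular) periodic orbits of degree $1$ exist only when the precession matches, and for $\alpha\neq1$ they do not close with period $T$ and winding $1$.

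\textbf{Main obstacle.} I expect the hardest step to be the lower bound that rules out collision for $\alpha\in(1,2)$ uniformly over all degree-one loops, and not just the comparison with circles. I would first try to handle it with the Levi-Civita regularization $u=z^2$ together with a Wirtinger-type inequality on the regularized loop.
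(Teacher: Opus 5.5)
The paper only quotes this theorem from Venturelli and gives no proof, so I compare your plan with the standard Gordon--Venturelli scheme. That scheme runs as follows. A minimizer exists on $\overline{\Omega}_k$. Loops through the origin cost at least $\A_{\mathrm{coll}}(T)=c_\alpha T^\beta$, $\beta=(2-\alpha)/(2+\alpha)$. A collision-free minimizer lies in the open set $\Omega_k$, so it is a classical $T$-periodic solution of winding $k$. Such solutions are classified through the apsidal angle $\Phi$, which lies strictly between $\pi/\sqrt{2-\alpha}$ and $\pi/(2-\alpha)$, and the surviving competitors are then compared with $\A_{\mathrm{coll}}(T)$. Your computation of $\A_{\mathrm{coll}}$ and the circle--collision $\Gamma$-function inequality are correct ingredients of this scheme. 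The gap is in the lower-bound mechanisms you propose.

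For $\alpha\in(0,1)$ the splitting step fails, because the $|k|$ sub-arcs on which $\theta$ advances by $2\pi$ are not closed: their endpoints lie on a common ray but at different radii. For such open arcs the infimal action in time $\tau$ is neither the circular nor the collision value. Start at radius $\varepsilon$, wind once along the circle of radius $\varepsilon$ in time $\varepsilon^{1+\alpha/2}$, at cost $O(\varepsilon^{1-\alpha/2})$. Then follow half of a collision-ejection orbit of period $2\tau$ out to its apocenter on the same ray. The action tends to $\tfrac12\A_{\mathrm{coll}}(2\tau)=2^{\beta-1}c_\alpha\tau^\beta$. So your arc constant is at most $2^{\beta-1}c_\alpha<c_\alpha$, and subadditivity gives only $\A^\alpha(u)\ge 2^{\beta-1}c_\alpha T^\beta$. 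For $k=\pm1$ the splitting is vacuous, and the claimed bound is just the statement itself. Consequently your plan never compares the collision with the actual competitors for $|k|\ge2$. These are the $k$-circle and the precessing $T$-periodic orbits with $q\Phi=\pi|k|$. For $\alpha<1$ the latter exist whenever some integer $q$ lies in $(|k|\sqrt{2-\alpha},|k|(2-\alpha))$, e.g.\ $|k|=2$, $q=3$ for $\alpha<1/2$. One clean way to make the comparison is the virial identity $\A^\alpha=\frac{2+\alpha}{2-\alpha}|h|T$ for $T$-periodic solutions. It reduces the question to comparing reduced periods at $h=-1$, which you can do with the decomposition $\sigma(c)=2\kappa[\int_c^{c^*}\Phi+c\Phi(c)]$ of Proposition~\ref{prop:qprec}. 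Since $\Phi<\pi\le\pi|k|$ when $\alpha<1$, both $q\sigma(c_q)$ and the $k$-circle value $2\kappa\pi|k|c^*$ exceed $\sigma(0)$.

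For $\alpha\in(1,2)$, $k=\pm1$, the Jensen/convexity step points the wrong way. Cauchy--Schwarz gives $\int_0^T r^2\dot\theta^2\,\dt\ge 4\pi^2/\int_0^T r^{-2}\,\dt$. Since $r^{-\alpha}=(r^{-2})^{\alpha/2}$ is concave in $r^{-2}$, at fixed $\int r^{-2}$ the potential term is \emph{largest} when $r$ is constant. More decisively, nothing in that step uses $\alpha>1$, yet its conclusion is false for $\alpha<1$. There, replacing the core of a collision-ejection loop by a tiny circle gives collision-free degree-one loops with action tending to $\A_{\mathrm{coll}}(T)<\A_{\mathrm{circ}}(T)$. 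What actually closes this case is the remark you make only in passing, placed inside the existence/regularity framework:
\begin{itemize}
\item[(a)] collision loops cost at least $\A_{\mathrm{coll}}(T)>\A_{\mathrm{circ}}(T)$, so the minimizer on $\overline{\Omega}_{\pm1}$ is collision-free and hence a classical $T$-periodic solution;
\item[(b)] since $\Phi>\pi$ for $\alpha>1$, the closing condition $q\Phi=\pi$ has no solution, so that solution is a circle.
\end{itemize}
The step you flag as the main obstacle is therefore routine: a one-dimensional reduction between collisions plus concavity of $\tau^\beta$. No Levi-Civita or Wirtinger argument is needed. The real inputs are the action comparison and the apsidal-angle bounds.
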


\noindent The problem of finding the minimizer of $\A^{\alpha}$ for $\alpha \in (1,2)$ for $k \neq \pm 1$ is still open, and we address it here. 
The difficulty of this remaining range is of a different nature than the cases already solved by Gordon and Venturelli. For $k = \pm 1$ the circular solution is always free of conjugate points, and the only competitor is the collision-ejection solution, so a direct comparison of actions is enough. For $k \geq 2$, however, the circular solution with winding number $k$ can itself become degenerate as a critical point, and additional families of periodic solutions, winding $k$ times around the origin while oscillating radially, enter into competition. A full classification therefore requires controlling three qualitatively different types of orbits simultaneously: the circular solution, the collision-ejection solution, and these intermediate precessing solutions.

Our approach combines three ingredients. First, we compute explicitly the Jacobi fields around the circular solution and show that a conjugate point appears in the interior of the period interval exactly when $\alpha < 2 - 1/|k|^2$. Second, we adapt the blow-up technique to show that minimizers are necessarily free of collisions whenever $\alpha > 2 - 1/|k|$, a strictly weaker threshold. Third, we prove the existence of periodic precessing solutions winding $k$ times around the origin exactly in the range between these two thresholds, using a continuity argument. Combining these three results yields a complete classification of the minimizer of $\A^\alpha$ on $\Omega_k$ for every $\alpha \in (1,2)$ and every $k \in \Z \setminus \{0\}$: the minimizer is the collision-ejection solution, a precessing periodic solution, or the $k$-circular solution, according to where $\alpha$ falls relative to the two thresholds $2 - 1/|k|$ and $2 - 1/|k|^2$ (Theorem~\ref{thm:gordon_general}). We complement this theoretical classification with a numerical study for $k = 2$, in which we compute the action of all three competing families and confirm that the transition between minimizing regimes occurs exactly where the theory predicts.

\section{The $k$-circular solution}
The circular solution is always a solution to the Euler-Lagrange equation associated to the action of Eq.~\eqref{eq:action}, and it always belongs to $\Omega_k$ by just repeating the circle $k$ times. We refer to this solution as $k$-circular solution. In \cite{fenucci_2022}, numerical computations showed that the $k$-circular solution presents conjugate points in $(0,T)$ for certain values of $\alpha$. Here we give a formal proof. 

\begin{proposition}
    For $\alpha \in (1,2)$ and $k \neq \pm 1$, the circular solution has a conjugate point in $(0,T)$ if and only if $\alpha \leq 2 - 1/|k|^2$. 
\end{proposition}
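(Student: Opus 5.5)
The plan is to compute all Jacobi fields along the $k$-circular solution explicitly in polar coordinates, and to reduce the existence of a conjugate point to the zero set of an explicit scalar function of $t$. Write the circular solution as $u_0(t) = r_0 e^{i\omega t}$ with $\omega = 2\pi k/T$. The Euler--Lagrange equations give $\omega^2 = \alpha r_0^{-\alpha-2}$. We may take $k>0$, since the case $k<0$ follows by reflection.

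In polar coordinates the Euler--Lagrange equations read $\ddot r - r\dot\theta^2 = -\alpha r^{-\alpha-1}$ and $\frac{d}{dt}(r^2\dot\theta)=0$. Linearizing at $(r_0,\omega t)$ with variations $(\rho,\varphi)$, the second equation integrates once to $\dot\varphi = -2\omega\rho/r_0 + c$, with $c$ a constant. Substituting this into the radial equation and using $\alpha(\alpha+1)r_0^{-\alpha-2} = (\alpha+1)\omega^2$, I expect the radial equation to decouple as
\begin{equation*}
\ddot\rho + \lambda^2 \rho = 2 r_0\omega c, \qquad \lambda^2 = (2-\alpha)\omega^2 .
\end{equation*}
Hence $\rho = A\cos\lambda t + B\sin\lambda t + 2r_0\omega c/\lambda^2$, and $\varphi$ is obtained by one further integration. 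The four-dimensional space of Jacobi fields is thus explicit. It contains the rotation field $(0,1)$, and the combination $\rho = \text{const}$, $\varphi$ linear in $t$, which comes from varying the radius within the family of circular orbits.

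Next I impose $\rho(0)=\varphi(0)=0$, which leaves a two-parameter family, and look for $t^*\in(0,T)$ at which some nonzero member satisfies $\rho(t^*)=\varphi(t^*)=0$. This is the vanishing of a $2\times 2$ determinant $D(t)$. Because of the secular term produced by $c$, I expect $D$ to have the form
\begin{equation*}
D(t) \propto \sin\frac{\lambda t}{2}\,\Big(\lambda t\,\cos\frac{\lambda t}{2} - \beta\, \sin\frac{\lambda t}{2}\Big),
\end{equation*}
with $\beta$ depending only on $\alpha$; this structure is the usual one for Kepler-type circular orbits. The key step is to locate the first positive zero $t_1$ of $D$, in terms of $\lambda t_1$. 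The first factor vanishes at $\lambda t = 2\pi$. For the second factor, I must check, using $\alpha\in(1,2)$, that $\lambda t\cot(\lambda t/2) = \beta$ has no root with $\lambda t\in(0,2\pi)$. I would get this by studying the monotonicity of $x\cot(x/2)$ on $(0,2\pi)$ together with the sign of $\beta$. If this holds, the first conjugate time is $t_1 = 2\pi/\lambda$.

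Finally I compare $t_1$ with $T$. Since $\lambda T = 2\pi k\sqrt{2-\alpha}$, the condition $t_1<T$ is equivalent to $k\sqrt{2-\alpha}>1$, that is, $\alpha < 2-1/k^2$. The main obstacle is the second-factor analysis: ruling out an earlier root of the transcendental equation, which requires the exact value of $\beta$ from the computation.

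A secondary issue is the endpoint $\alpha = 2-1/k^2$. There $t_1 = T$ exactly, so the closed-interval version of the statement needs some interpretation, either counting $t=T$ or treating the degenerate case separately. I would check whether a genuinely periodic Jacobi field appears at this value, and phrase the endpoint case according to the convention adopted for conjugate points.
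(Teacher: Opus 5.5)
Your plan is correct and follows the paper's route. You use the same decoupled oscillator with $\lambda^2=(2-\alpha)\omega^2$ and the same $2\times 2$ determinant, which factors as $\frac{2}{(2-\alpha)\lambda^2}\sin\frac{\tau}{2}\bigl(8\sin\frac{\tau}{2}-(\alpha+2)\tau\cos\frac{\tau}{2}\bigr)$ with $\tau=\lambda t$; thus $\beta=8/(\alpha+2)>2$, and your monotonicity argument ($x\cot(x/2)$ decreases from $2$ on $(0,2\pi)$) replaces the paper's sign estimates on $(0,\pi]$ and $[\pi,2\pi)$. Your endpoint concern is also justified: at $\alpha=2-1/|k|^2$ the first conjugate time is exactly $T$, so the stated ``$\le$'' holds only when $(0,T]$ is counted, and with the open interval $(0,T)$ it should be ``$<$'' — the version used afterwards, since the corollary excludes the endpoint and case iii) of the theorem includes it.
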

\begin{proof}
    The Lagrangian of the functional of Eq.~\eqref{eq:action} in polar coordinates $(r, \theta)$ is 
    \begin{equation}
        L(r, \dot{r}, \dot{\theta}) = \frac{1}{2}\dot{r}^2 + \frac{1}{2}r^2\dot{\theta}^2 + \frac{1}{r^\alpha}.
    \end{equation}
    The $k$-circular orbit is obtained for $r=r_k$ and $\theta = \omega t$, where $r_k\omega^2 = \alpha r_k^{-\alpha-1}$. The Lagrangian of the second variation around the $k$-circular solution is
    \begin{equation}
        \mathcal{L}(\rho, \dot{\rho}, \dot{\varphi}) = \frac{1}{2}\dot{\rho}^2 + \frac{1}{2}r_k^2 \dot{\varphi}^2 + 2 r_k \omega \rho\dot{\varphi} + \frac{1}{2}(\alpha + 2) \omega^2\rho^2.
    \end{equation}
    The variable $\varphi$ is cyclic, therefore $\partial\mathcal{L}/\partial\dot{\varphi} = r_k^2\dot{\varphi} + 2r_k \omega\rho = C$, where $C$ is a constant. The Euler-Lagrange equation of $\mathcal{L}$ therefore reduces to a forced harmonic oscillator
    \begin{equation}
        \ddot{\rho} + \nu^2\rho = \frac{2\omega C}{r_k}, \quad \nu^2 = (2-\alpha)\omega^2.
    \end{equation}

    For a Jacobi field, we need initial conditions $(\rho(0), \varphi(0)) = (0,0)$ and $(\dot{\rho}(0), \dot{\varphi}(0)) = (A,B)$, the solution integrates to
    \begin{equation}
        \begin{cases}
            \displaystyle \rho(t)  = \frac{A}{\nu} \sin(\nu t) + \frac{2\omega r_k B}{\nu^2}\big( 1- \cos(\nu t) \big), \\[2ex]
            \displaystyle \varphi(t)  = \frac{2 \omega A }{r_k \nu^2} \big( \cos(\nu t) -1 \big) + B \bigg[ \big( 1-\frac{4\omega^2}{\nu^2} \big)t + \frac{4 \omega^2}{\nu^3} \sin(\nu t) \bigg]. 
        \end{cases}
    \end{equation}
    The fundamental solution $Y_0(t)$ of the Jacobi equation, obtained by the two solutions with initial conditions $(A,B)=(1,0)$ and $(A,B) =(0,1)$, is then
    \begin{equation}
        Y_0(t) = \begin{pmatrix} \dfrac{1}{\nu}\sin(\nu t) & \dfrac{2\omega r_k}{\nu^2}\big[1-\cos(\nu t)\big] \\[2mm] \dfrac{2\omega}{r_k\nu^2}\big[\cos(\nu t)-1\big] & \Big(1-\dfrac{4\omega^2}{\nu^2}\Big)t+\dfrac{4\omega^2}{\nu^3}\sin(\nu t) \end{pmatrix}.
    \end{equation}
    With few computations ans substituting $\tau = \nu t$ we obtain:
    \begin{equation}
    \det Y_0(t) = \frac{1}{(2-\alpha)\,\nu^2}\;g(\tau),\qquad g(\tau) = 8(1-\cos\tau)-(\alpha+2)\,\tau\sin\tau.
    \end{equation}
    \begin{figure}
        \centering
        \includegraphics[width=0.8\linewidth]{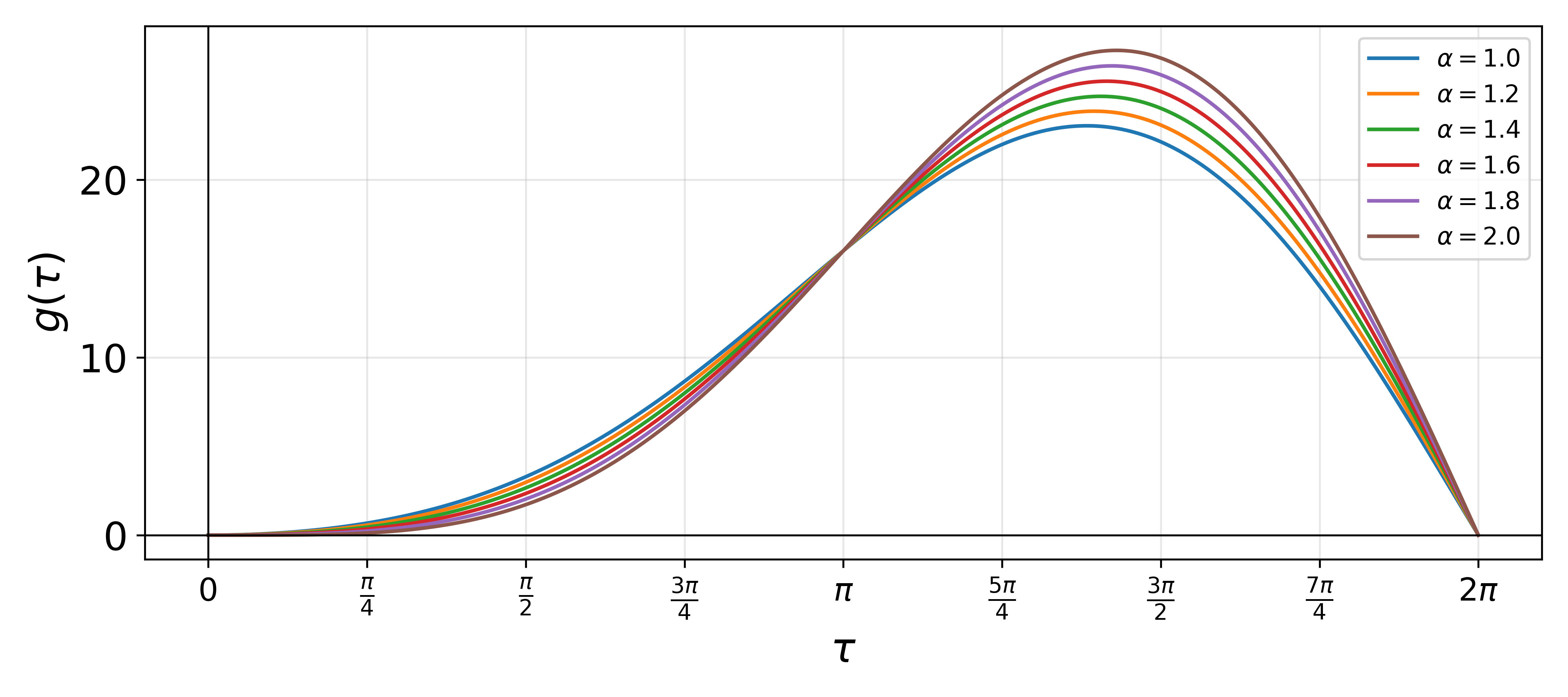}
        \caption{The function $g(\tau)$ for different values of $\alpha \in (1,2)$.}
        \label{fig:gfun}
    \end{figure}
    The function $g(\tau)$ is shown in Fig.~\ref{fig:gfun} for some value of $\alpha$ for a graphic representation. Near $\tau=0$, $g(\tau) = (2-\alpha)\tau^2 >0$, and $g(2\pi)=0$. For $\tau  \in [\pi, 2\pi)$, $g(\tau)>0$ because it is a sum of two positive terms. 
    For $\tau \in (0,\pi]$, note that 
    $g(\tau)>8(1-\cos\tau)-4\tau\sin\tau>0$,
    therefore $g(\tau)>0$ on the whole $[0,2\pi)$. 
    Then the first conjugate point is always at $\tau^* = 2\pi$, which corresponds to $t^* = 2\pi/(\omega\sqrt{2-\alpha})$. Therefore the $k$-circular orbit has no conjugate points if and only if $\omega T < 2\pi/\sqrt{2-\alpha}$. Using $\omega T = 2k\pi$ we get 
    \begin{equation}
        \alpha > 2-\frac{1}{k^2}.
    \end{equation}
\end{proof}

\begin{corollary}
     For $\alpha \in (1, 2 - 1/|k|^2)$, the $k$-circular solution is not a minimizer of $\A^{\alpha}$.
     \label{cor:circ_not_min}
\end{corollary}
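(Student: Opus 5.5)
The plan is to derive the corollary from the Proposition together with the classical Jacobi necessary condition for minimality. If the $k$-circular solution $\bar u$ minimized $\A^\alpha$ on $\overline{\Omega}_k$, then, since $\bar u$ is collision-free and $\Omega_k$ is open in $H^1$, $\bar u$ would be an interior local minimizer of $\A^\alpha$ on the space of $T$-periodic $H^1$ loops. In particular, it would minimize $\A^\alpha$ among curves with the same endpoints $\bar u(0)=\bar u(T)$ (a fixed-endpoint problem on $[0,T]$), because such perturbations are periodic and, for small $H^1$ norm, do not change the winding number. Hence the second variation $\delta^2\A^\alpha(\bar u)[v]=\int_0^T 2\mathcal{L}(v,\dot v)\,\dt$ would be nonnegative for all $v\in H^1_0([0,T],\R^2)$.

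The key steps are as follows. First, for $\alpha<2-1/k^2$ we have $\omega T=2|k|\pi>2\pi/\sqrt{2-\alpha}$, so the first conjugate time $t^*=2\pi/(\omega\sqrt{2-\alpha})$ found in the Proposition lies strictly inside $(0,T)$. Second, the Proposition yields $\det Y_0(t^*)=0$, so there exists $(A,B)\neq(0,0)$ with Jacobi field $J=(\rho,\varphi)$ satisfying $J(0)=J(t^*)=0$ and $J\not\equiv 0$. Third, we use this field to build the standard test variation: take $v=J$ on $[0,t^*]$ and $v=0$ on $[t^*,T]$, written in Cartesian form $v=\rho\,e_r+r_k\varphi\,e_\theta$ along the circle. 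Integration by parts, using the Jacobi equation and the vanishing boundary terms, gives $\delta^2\A^\alpha(\bar u)[v]=0$. Consequently $v$ minimizes the (nonnegative) quadratic form and must therefore solve the Jacobi equation on all of $[0,T]$. However, $v$ has a corner at $t^*$, since $\dot J(t^*)\neq 0$ by uniqueness for the linear ODE. This contradiction shows that one can perturb $v$ to obtain a strictly negative second variation. Thus $\bar u$ is not a local minimizer, and a fortiori not a global minimizer.

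I expect the main obstacle to be the rigor of the third step: justifying that a broken Jacobi field with zero second variation forces negativity. The cleanest route is the Legendre condition, which holds because $\partial^2\mathcal{L}/\partial(\dot\rho,\dot\varphi)^2=\mathrm{diag}(1,r_k^2)$ is positive definite. With strict Legendre, the Jacobi theorem applies directly: if $\delta^2\A\ge 0$ on $H^1_0$, then any $v$ realizing $\delta^2\A[v]=0$ is a critical point of the quadratic form and hence $C^1$, which $v$ is not. Alternatively, we can smooth the corner explicitly in a neighbourhood $[t^*-\varepsilon,t^*+\varepsilon]$ and show that the quadratic form decreases at order $\varepsilon$. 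A minor point to check is that the polar-coordinate second variation $\mathcal{L}$ agrees with the Cartesian one up to a boundary term that vanishes for $v$ vanishing at the endpoints.
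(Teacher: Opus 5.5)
Your proposal is correct and takes essentially the same route as the paper. The paper's proof simply invokes the Jacobi necessary condition (a smooth minimizer has no conjugate points in $(0,T)$) together with the conjugate time $t^*=2\pi/(\omega\sqrt{2-\alpha})$ from the Proposition. You additionally spell out three things: the classical broken-Jacobi-field argument behind that condition, the reduction from periodic to fixed-endpoint variations, and the fact that $t^*<T$ holds strictly only for $\alpha<2-1/k^2$ (at $\alpha=2-1/k^2$ one has $t^*=T$), which is exactly why the open interval is the right hypothesis.
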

\begin{proof}
    By the theory of local minimizers, a necessary condition for a smooth solution of the Euler-Lagrange equation to be a directional local minimum point is for it to be free of conjugate points \cite{fenucci_2022}. Therefore, the $k$-circular solution for $\alpha \in (1, 2 - 1/k^2)$ is not a directional local minimum point, but rather a saddle point. 
\end{proof}

\section{$q$-precessing periodic motions}
For $\alpha \in (1,2)$, bounded motions exist but are generally not closed. The precessing angle $\Phi$ swept by passing from the apocenter to the next pericenter is between $\pi/\sqrt{2-\alpha}$ and $\pi/(2-\alpha)$ (e.g. \cite{castelli-2014}). Therefore, precessing closed motions with exactly one radial oscillation per period, which revolve $k$ times around $0$, could exist only for
\begin{equation}
    2-\frac{1}{|k|} < \alpha < 2-\frac{1}{|k|^2}.
\end{equation}
Precessing motions with $q$ oscillations per period which revolve $k$ times around $0$, could exist only for
\begin{equation}
    2-\frac{q}{|k|} < \alpha < 2-\frac{q^2}{|k|^2}.
    \label{eq:alpha_q}
\end{equation}
We refer to these solutions as $q$-precessing motions.

\begin{proposition}
    For $\alpha \in (1,2), \ k\in \Z \setminus \{ 0 \}$ such that $2-1/|k| < \alpha < 2-1/|k|^2$ and any $T>0$, there exists a periodic $1$-precessing motion which revolves $|k|$ times around $0$, and period $T$.
    \label{prop:k-precessing-existence}
\end{proposition}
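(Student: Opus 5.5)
We use the continuity argument announced in the introduction, applied to the apsidal angle $\Phi$ of bounded orbits as a function of the orbit's shape. By the homogeneity of the potential, $u(t)\mapsto \lambda u(\lambda^{-(\alpha+2)/2}t)$ sends solutions to solutions and rescales periods by $\lambda^{(\alpha+2)/2}$. It therefore suffices to produce one closed orbit with one radial oscillation and winding number $|k|$; any prescribed $T>0$ is then obtained by choosing $\lambda$. Also, $k\mapsto -k$ is handled by reversing the orientation (the map $\theta\mapsto-\theta$), so we may take $k>0$.

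\textbf{Parametrization.} Fix the energy, say $E=-1$ after scaling. Bounded non-circular orbits at this energy are parametrized by the angular momentum $c\in(0,c_{\max})$. Here $c_{\max}$ corresponds to the circular orbit, i.e. the minimum of the effective potential $V_c(r)=c^2/(2r^2)-1/r^\alpha$ touching $E$. The half-period apsidal angle is
\begin{equation*}
\Phi(c)=\int_{r_-(c)}^{r_+(c)} \frac{c\,\d r}{r^2\sqrt{2(E-V_c(r))}} .
\end{equation*}
A $1$-precessing orbit revolving $k$ times is closed after one radial oscillation exactly when $2\Phi(c)=2\pi k$, i.e. $\Phi(c)=k\pi$. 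We will show that $\Phi$ is continuous on $(0,c_{\max})$. The turning points $r_\pm$ are simple roots of $E-V_c$ depending continuously on $c$, and the standard substitution $r=r_-+(r_+-r_-)\sin^2(s)$ removes the square-root singularities, so continuity follows from dominated convergence.

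\textbf{Limits.} As $c\to c_{\max}$ the orbit approaches the circular one. Linearizing $V_c$ at its minimum gives radial frequency $\nu=\sqrt{2-\alpha}\,\omega$, as in the Jacobi computation of the previous proposition, so $\Phi\to\pi/\sqrt{2-\alpha}$. As $c\to0$ the orbit degenerates to collision-ejection. In the regularized limit the pericenter passage sweeps an angle approaching $\pi/(2-\alpha)$. This follows by blowing up near $r_-$: rescale $r=r_- \rho$; then $c^2/r_-^2\sim 2/r_-^\alpha$, and the pericentric part of the integral tends to $\int_1^\infty \d\rho/(\rho\sqrt{\rho^{2-\alpha}-1})=\pi/(2-\alpha)$ (after the substitution $\rho^{2-\alpha}=\sec^2$). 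The outer part of the integral tends to $0$.

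\textbf{Conclusion and main obstacle.} These are exactly the bounds quoted from \cite{castelli-2014}, so $\Phi$ ranges over an interval containing the open interval between $\pi/\sqrt{2-\alpha}$ and $\pi/(2-\alpha)$. The hypothesis $2-1/k<\alpha<2-1/k^2$ is equivalent to $\pi/\sqrt{2-\alpha}<k\pi<\pi/(2-\alpha)$. The intermediate value theorem then gives $c^*$ with $\Phi(c^*)=k\pi$, and rescaling gives period $T$. The main obstacle is the $c\to0$ limit, which needs a careful uniform estimate splitting the integral at $r=Mr_-$. The first part converges to the blow-up integral truncated at $M$. The second part is bounded by $\int_{Mr_-}^{r_+} c\,\d r/(r^2\sqrt{r^{-\alpha}-1})$, which is $O(M^{-(1-\alpha/2)})$ after using $c\lesssim r_-^{1-\alpha/2}$. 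Letting $M\to\infty$ completes the argument. Monotonicity of $\Phi$ is not needed for existence.
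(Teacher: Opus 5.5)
Your proposal is correct and follows essentially the same route as the paper: continuity of the apsidal angle $\Phi$ between its near-circular limit $\pi/\sqrt{2-\alpha}$ and its collision limit $\pi/(2-\alpha)$, the intermediate value theorem at $\Phi=|k|\pi$, closure after one radial oscillation, and the homogeneity rescaling to match $T$, with the bonus that you justify the two limits the paper only quotes. One small repair is needed in your tail estimate: since $2(E-V_c)\le 2(r^{-\alpha}-1)$, the comparison with $\int c\,\d r/(r^2\sqrt{r^{-\alpha}-1})$ goes the wrong way near $r_+$, so split again at a fixed radius such as $r=1/2$, using $2(E-V_c)\gtrsim r^{-\alpha}$ on $[Mr_-,1/2]$ and $2(E-V_c)\gtrsim r_+-r$ on $[1/2,r_+]$, which gives $O(M^{-(1-\alpha/2)})+O(c)$ and completes the limit.
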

\begin{proof}
    For a given angular momentum $c$ and energy $h<0$, the angle between the apocenter and the pericenter is
    \begin{equation}
        \Phi(h,c) = \int_{\rho_{\min}}^{\rho_{\max}}\frac{|c|}{\rho\sqrt{2h\rho^2+2\rho^{2-\alpha}}-c^2}\d \rho,
    \end{equation}
    where $\rho_{\min},\rho_{\max}$ are the solutions of $V^c(\rho)=h$, where $V^c$ is the effective potential. It is known that $\Phi(h,c)\to \pi/(2-\alpha)$ for $c\to0^+$, and $\Phi(h,c)\to \pi/\sqrt{2-\alpha}$ for $h\to h_{\textrm{circ}}(c)$, where $h_{\textrm{circ}}(c)$ is the energy of the circular solution with angular momentum $c$. Therefore, by continuity of $\Phi$, there exist values of $(h,c)$ such that the precessing angle is exactly $\pi |k|$. 

    Because of the reflection symmetry of the central force problem with $\alpha$-homegeneous potential, the orbit is therefore closed, and the winding number is exactly $k$. By the time scaling symmetry, if $u(t)$ is a solution of the Euler-Lagrange equation with period $\tau$, then $u_\lambda(t) = \lambda^{2/(2+\alpha)}u(t/\lambda)$ is still a periodic solution with period $\lambda\tau$, therefore any given period $T$ can be matched.  
\end{proof}

\begin{remark}
    The existence of periodic $q$-precessing motions in the range of Eq.~\eqref{eq:alpha_q} is the same as above. Note that for $|k|=2,3,4$ there exist only $1$-precessing motions in the range $2-1/|k| < \alpha < 2-1/|k|^2$, while for $|k|\geq 5$ there exist also $q$-precessing motion for $q \geq 2$. 
    \label{rmk:q=1}
\end{remark}

\begin{proposition}
    Let $\alpha \in (1,2), \ k\in \Z\setminus\{0\}$ such that $\alpha < 2-1/|k|$. Then every $q$-precessing solution with winding number $k$ has action larger than the collision-ejection solution. 
    \label{prop:qprec}
\end{proposition}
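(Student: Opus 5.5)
The plan is to reduce the comparison to a comparison of energies at fixed period, and then to use the hypothesis $\alpha<2-1/|k|$ to force the number $q$ of radial oscillations to be large enough for the collision-ejection solution to win.

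\emph{Step 1 (action in terms of energy).} Let $u$ be a $T$-periodic solution with energy $h<0$. Write $K=\frac12|\dot u|^2$ and $U=|u|^{-\alpha}$. Integrating the Lagrange--Jacobi identity $\frac{d^2}{dt^2}\frac{|u|^2}{2}=2K-\alpha U$ over one period gives the virial relation $\int_0^T 2K\,\dt=\alpha\int_0^T U\,\dt$. Combined with $hT=\int_0^T (K-U)\,\dt$, this yields
\begin{equation*}
\A^\alpha(u)=\frac{2+\alpha}{2-\alpha}\,T\,|h| .
\end{equation*}
The same identity holds for the collision-ejection solution, since the virial relation survives a collision whose action is finite in the weak force case. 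Hence, at fixed $T$, comparing actions is the same as comparing $|h|$.

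\emph{Step 2 ($q\ge 2$ is forced).} A $q$-precessing solution with winding number $k$ satisfies $2q\Phi(h,c)=2\pi|k|$, so $\Phi=\pi|k|/q$. By the bounds recalled before Proposition~\ref{prop:k-precessing-existence}, $\Phi<\pi/(2-\alpha)$. The hypothesis $\alpha<2-1/|k|$ means $\pi/(2-\alpha)<\pi|k|$. Therefore $q=1$ is impossible, and every such solution has $q\ge2$ radial oscillations per period. Equivalently, $\alpha$ must lie in the range \eqref{eq:alpha_q} for some $q\ge2$.

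\emph{Step 3 (scaling of the radial period).} Let $P(h,c)$ be the radial (apocenter-to-apocenter) period. Homogeneity gives
\begin{equation*}
P(h,c)=|h|^{-\beta}\,\widetilde P(\tilde c),\qquad \beta=\frac{2+\alpha}{2\alpha},
\end{equation*}
where $\tilde c$ is the scale-invariant angular momentum. For the $q$-precessing orbit, $T=qP(h,c)$, so
\begin{equation*}
|h|=\big(q\,\widetilde P(\tilde c)/T\big)^{1/\beta}.
\end{equation*}
For the collision-ejection solution, $\tilde c=0$ and $T=P(h_0,0)$, so
\begin{equation*}
|h_0|=\big(\widetilde P(0)/T\big)^{1/\beta}.
\end{equation*}
By Step 1 the claim therefore reduces to the inequality $q\,\widetilde P(\tilde c)>\widetilde P(0)$ for every admissible $\tilde c$ and every $q\ge2$.

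\emph{Step 4 (main obstacle: bounding the normalized radial period).} It suffices to prove $\widetilde P(\tilde c)>\widetilde P(0)/2$ for all $\tilde c$ between $0$ and the circular value. I would first try to show that $\tilde c\mapsto\widetilde P(\tilde c)$ is monotone, by writing
\begin{equation*}
\widetilde P=2\int_{\rho_{\min}}^{\rho_{\max}}\frac{\d\rho}{\sqrt{2(h-V^c(\rho))}}
\end{equation*}
and differentiating in $c$ at fixed $h$, in the spirit of Chicone-type period-function arguments. The infimum would then be one of the two endpoint values, and both are explicit:
\begin{itemize}
\item the collision value $\widetilde P(0)$ is a Beta integral;
\item the circular limit is $2\pi/\nu=2\pi/(\omega\sqrt{2-\alpha})$, rescaled to energy $h$ using the circular relations $r\omega^2=\alpha r^{-\alpha-1}$ and $h=(\frac{\alpha}{2}-1)r^{-\alpha}$.
\end{itemize}
Checking that the ratio of these two values exceeds $1/2$ for $\alpha\in(1,2)$ is an elementary inequality between Gamma functions and powers.

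If monotonicity fails, the fallback is a direct lower bound on $\widetilde P(\tilde c)$: comparing $V^c$ with $V^0$ on $[\rho_{\min},\rho_{\max}]$ should bound the time spent near apocenter by a fixed fraction of $\widetilde P(0)$. This quantitative comparison of radial periods is where I expect the real difficulty. Once it holds, Step 3 gives $|h|>|h_0|$, and Step 1 gives the strict inequality of actions.
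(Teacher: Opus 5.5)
Your Steps 1--3 are correct and reproduce the paper's reduction. Your $\widetilde P$ at $h=-1$ is the paper's reduced period $\sigma$, and the claim becomes $q\,\sigma(c_q)>\sigma(0)$, where $c_q$ is fixed by the closing condition $q\Phi(c_q)=\pi|k|$.

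The gap is in Step 4, where you drop that condition and aim for the uniform bound $\widetilde P(\tilde c)>\widetilde P(0)/2$ on all of $[0,c^*]$. This bound is false for $\alpha$ close to $2$. At $h=-1$ the circular endpoint is
\[
\widetilde P(c^*)=\frac{2\pi}{\nu}=\frac{2\pi}{\sqrt{\alpha(2-\alpha)}}\Big(\frac{2-\alpha}{2}\Big)^{\frac{2+\alpha}{2\alpha}}\sim\frac{\pi}{\sqrt2}\sqrt{2-\alpha}\quad(\alpha\to2^-),
\]
whereas the collision value $\widetilde P(0)=\frac{\sqrt2}{\alpha}B\big(\tfrac12+\tfrac1\alpha,\tfrac12\big)$ tends to $\sqrt2$. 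Hence $\widetilde P(c^*)/\widetilde P(0)\to0$. Numerically the ratio is about $0.84$ at $\alpha=1.5$, drops below $1/2$ near $\alpha\approx1.88$, and is about $0.45$ at $\alpha=1.9$.

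Since $\widetilde P(\tilde c)\to\widetilde P(c^*)$ as $\tilde c\to c^*$, there are then values of $\tilde c$ with $2\widetilde P(\tilde c)<\widetilde P(0)$, whether or not $\widetilde P$ is monotone. The hypothesis $\alpha<2-1/|k|$ allows such $\alpha$ once $|k|$ is large, e.g.\ $\alpha=1.9$ for $|k|\ge 11$. So the Gamma-function inequality you plan to check is false. No fallback comparison of $V^c$ with $V^0$ can give the bound either, because the bound itself does not hold.

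The missing idea is to keep $q$ coupled to $\tilde c$. The closing condition forces $q=\pi|k|/\Phi(\tilde c)$ to be large exactly where $\widetilde P$ is small, namely near the circular orbit. The paper encodes this through the radial action $\psi$.

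With $F(h,c)=2\int\sqrt{2h+2\rho^{-\alpha}-c^2/\rho^2}\,\mathrm{d}\rho$ one has $F_h=\sigma$ and $F_c=-2\Phi$. Homogeneity gives $F(h,c)=|h|^{-\kappa}\psi(|h|^{\kappa}|c|)$ with $\kappa=\frac{2-\alpha}{2\alpha}$. At $h=-1$ this yields $\sigma(c)=\kappa(\psi-c\psi')=2\kappa\big[\int_c^{c^*}\Phi(s)\,\mathrm{d}s+c\,\Phi(c)\big]$, and therefore
\[
q\,\sigma(c_q)-\sigma(0)=2\kappa\Big[(q-1)\int_{c_q}^{c^*}\Phi(s)\,\mathrm{d}s+\Big(\pi|k|\,c_q-\int_0^{c_q}\Phi(s)\,\mathrm{d}s\Big)\Big].
\]
The first term is nonnegative, and the second is positive because $\Phi<\pi/(2-\alpha)<\pi|k|$ pointwise. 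This is where the hypothesis enters; the argument does not need $q\ge2$ at all.

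The same identity gives $\sigma'(c)=2\kappa c\,\Phi'(c)$, which is the monotonicity you were after. It only confirms that the infimum of $\widetilde P$ is the circular value, which is too small.
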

\begin{proof}
    Without loss of generality, we can fix the energy to $h=-1$. Let us define $f(\rho) = 2\rho^{-\alpha} -2 -c^2/\rho^2$, with $c \in [0,c^*]$, with $c^*$ corresponding to the angular momentum of the circular motion. Then the reduced period $\sigma$, the precession angle $\Phi$, and the radial action $\psi$ can be written as
    \begin{equation}
        \sigma(c) = 2\int_{\rho_{\min}}^{\rho_{\max}} \frac{\d \rho}{\sqrt{f}}, \quad \Phi(c) = \int_{\rho_{\min}}^{\rho_{\max}} \frac{|c|}{\rho^2\sqrt{f}}\d\rho, \quad \psi(c) = 2\int_{\rho_{\min}}^{\rho_{\max}}\sqrt{f}\d\rho.
    \end{equation}
    Minimizing the action $\A^\alpha$ is equivalent to minimizing the reduced period $\sigma$. Note therefore that $\sigma(0)$ is related to the action of the collision-ejection solution, while the action of a $q$-precessing solution is related to $q\sigma(c_q)$. Note also that $q\Phi(c_q)=\pi |k|$.

    Set $\kappa = (2-\alpha)/(2\alpha)$, we have that $\sigma(c) = \kappa(\psi(c) - |c|\psi'(c))$, and $\psi'(c)=-2\Phi(c)$. Therefore
    \begin{equation}
        \sigma(c) = 2\kappa\bigg[ \int_{|c|}^{c^*}\Phi(s)\d s + |c| \Phi(c) \bigg].
        \label{eq:sigma_c_decomp}
    \end{equation} 
    In fact, set $F(h,c) = 2\int\sqrt{2h+2\rho^{-\alpha}-c^2/\rho^2}\d \rho$, we have $F_h = \sigma$ and $F_c= -2\Phi$.The substitution $\rho\to\lambda\rho$ gives $F(h,c)=|h|^{-\kappa}\psi(|h|^\kappa |c|)$. By substituting $h=-1$ we get Eq.~\eqref{eq:sigma_c_decomp}.

    Let us now prove that $\sigma(0) < q \sigma(c_q)$. From Eq.~\eqref{eq:sigma_c_decomp} we have:
    \begin{equation}
        q\sigma(c_q) - \sigma(0) = 2\kappa\bigg[ (q-1)\int_{c_q}^{c^*}\Phi(s)\d s + \bigg( \pi k c_q - \int_0^{c_q}\Phi(s)\d s \bigg) \bigg].
    \end{equation}
    The first term above is non-negative since $q\geq 1$. For the second term, $\Phi(s) < \pi/(2-\alpha)$, and by hypothesis this is smaller than or equal to $\pi |k|$ for all $s\in[0,c_q]$. Therefore the second term is positive, which finishes the proof. 
\end{proof}
\section{Generalization of Gordon's theorem }

\begin{proposition}
      For $\alpha \in (1,2)$ and $k\in \Z \setminus \{0\}$ such that $\alpha > 2-1/|k|$, the minimizer of $\A^{\alpha}$ is free of collisions.
    \label{prop:nocoll}  
\end{proposition}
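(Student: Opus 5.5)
We argue by contradiction, using the blow-up (Marchal-type averaging) technique adapted to the winding constraint. Let $u$ be a minimizer of $\A^\alpha$ on $\overline{\Omega}_k$. Suppose $u$ has a collision at some instant $t_0$, say $u(t_0)=0$.

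\textbf{Reduction to collision-ejection.} By standard arguments for weak-force minimizers, the collision set has measure zero. Away from collisions $u$ solves the Euler--Lagrange equation. Near $t_0$ we apply the blow-up $u_\lambda(t)=\lambda^{-2/(2+\alpha)}u(t_0+\lambda t)$, which converges as $\lambda\to0$ to a parabolic homothetic collision-ejection arc. This arc leaves and returns to the origin along some asymptotic directions $\xi^\pm$ on the unit circle. The winding constraint is only topological, so arcs between collisions can be rearranged. Replacing each collision-free sub-arc by a minimizer of the same duration with the same winding lowers the action. This shows that if $u$ collides at all, we may compare with a path consisting of radial collision-ejection pieces joined by angular portions. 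The key structural claim I would establish first is therefore: \emph{if a minimizer has a collision, then the collision-ejection solution itself is a minimizer}. The argument is that the total winding $k$ can be realized at zero extra cost by inserting the rotation at the collision instant, since a path through $0$ can be assigned any degree in $\overline{\Omega}_k$. In particular, minimum value $=\A^\alpha(\text{collision-ejection})$.

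\textbf{Constructing a cheaper competitor.} It then suffices to produce a loop in $\Omega_k$ with strictly smaller action than the collision-ejection solution whenever $\alpha>2-1/|k|$. I would do this locally near the collision, Marchal-style. Replace a small neighbourhood of the collision arc by a Keplerian-type arc of the $\alpha$-homogeneous problem that does not pass through the origin and turns by the prescribed angle. The prescribed angle is whatever is needed to connect $\xi^-$ to $\xi^+$ with the right winding; modulo full turns, one can always arrange a net turning of $\pi|k|$ split appropriately. The comparison uses the precession angle $\Phi(c)\to\pi/(2-\alpha)$ as $c\to0^+$, the same fact used in Proposition~\ref{prop:k-precessing-existence}. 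When $\alpha>2-1/|k|$ we have $\pi/(2-\alpha)>\pi|k|$, so nearly-collision orbits with small angular momentum $c>0$ turn by more than $\pi|k|$ per radial oscillation. By continuity of $\Phi$, some $c_1\in(0,c^*)$ has $\Phi(c_1)=\pi|k|$, and $\Phi>\pi|k|$ on $(0,c_1)$. Using the decomposition \eqref{eq:sigma_c_decomp} at $h=-1$, exactly as in Proposition~\ref{prop:qprec}, we get
\[
\sigma(c_1)-\sigma(0)=2\kappa\Big[\pi|k|c_1-\int_0^{c_1}\Phi(s)\,\d s\Big]<0 .
\]
Hence the $1$-precessing orbit with winding $k$ has strictly smaller reduced period, and thus strictly smaller action after rescaling to period $T$, than the collision-ejection solution. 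This is a collision-free element of $\Omega_k$ with lower action, contradicting the claim above.

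\textbf{Main obstacle.} The delicate step is the structural claim in the first paragraph: that a colliding minimizer must have action equal to that of the collision-ejection solution. One has to rule out minimizers with collisions interleaved with genuine non-radial arcs, and to handle how the degree is accounted for at collisions. I would first try the following. Between consecutive collisions the arcs are collision-free solutions with zero angular momentum, hence radial, by conservation of angular momentum on each arc. Therefore a colliding minimizer consists of radial collision-ejection pieces. Minimizing over how the period is split among the pieces, using strict convexity in time of the action $\propto \tau^{(2-\alpha)/(2+\alpha)}$, shows a single piece is optimal. With this reduction, the comparison computed above concludes the proof.
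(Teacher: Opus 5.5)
Your approach differs from the paper's. The paper uses a blow-up/obstacle argument: each near-collision of $u_\varepsilon$ sweeps asymptotically at least $2\pi/(2-\alpha)$, which cannot fit into the total $2\pi|k|$ once $\alpha>2-1/|k|$, and no competitor is ever needed. You instead bound every colliding loop below by the collision--ejection action, then exhibit a collision-free loop of smaller action via \eqref{eq:sigma_c_decomp}. Your inequality $\sigma(c_1)-\sigma(0)=2\kappa\big[\pi|k|c_1-\int_0^{c_1}\Phi\big]<0$ is correct, and taking $c_1$ to be the smallest root gives $\Phi>\pi|k|$ on $(0,c_1)$.

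\textbf{The gap.} The problem is the \emph{existence} of $c_1$. The intermediate value argument needs $\Phi$ to drop below $\pi|k|$ at the circular end, i.e.\ $\pi/\sqrt{2-\alpha}<\pi|k|$, i.e.\ $\alpha<2-1/k^2$. For $\alpha\in[2-1/k^2,2)$ one has $\Phi(c)>\pi/\sqrt{2-\alpha}\ge\pi|k|$ on all of $(0,c^*)$. So no $1$-precessing orbit with winding $k$ exists, and your ``by continuity'' step fails. For $|k|=1$ this is the entire range $(1,2)$, because $2-1/|k|=2-1/k^2=1$. As written, you prove the statement only for $|k|\ge 2$ and $2-1/|k|<\alpha<2-1/k^2$.

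\textbf{How to close it.} Use the same identity with the $k$-circular solution as competitor in the remaining range. At $h=-1$ the circle has $r^{-\alpha}=2/(2-\alpha)$ and $\omega^2=\alpha r^{-\alpha-2}$, hence $2\kappa c^*=2/\omega$. Its time for $|k|$ revolutions is therefore $2\pi|k|/\omega=2\kappa\,\pi|k|\,c^*$, and
\[
\sigma_{\mathrm{circ}}-\sigma(0)=2\kappa\Big[\pi|k|\,c^*-\int_0^{c^*}\Phi(s)\,\d s\Big]<0,
\]
since $\Phi>\pi|k|$ on $(0,c^*)$. Comparing reduced periods at $h=-1$ is justified for the circle exactly as in Proposition~\ref{prop:qprec}, by the Lagrange--Jacobi identity.

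\textbf{Smaller points.}
\begin{itemize}
\item The splitting step uses strict \emph{concavity}, hence strict subadditivity, of $\tau\mapsto\tau^{(2-\alpha)/(2+\alpha)}$, not convexity.
\item The Marchal-style ``local replacement near the collision'' and the blow-up paragraph are never actually used; you compare whole loops globally, so those parts can be dropped.
\item The cleanest lower bound for colliding loops is Gordon's reduction $\A^\alpha(u)\ge\int_0^T\big(\tfrac12(\tfrac{d}{dt}|u|)^2+|u|^{-\alpha}\big)\dt$. It avoids having to prove that collisions are isolated.
\end{itemize}

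With the case split at $2-1/k^2$, your argument is a legitimate alternative to the paper's. It gives an explicit, checkable action inequality. In exchange it relies on the two-sided bounds for $\Phi$ and loses the paper's uniform-in-$\alpha$ local angle count.
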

\begin{proof}
    The proof is done by the blow-up technique \cite{venturelli-2002, ferrario-terracini_2004}. Let us fix $\varepsilon>0$ and define
    \begin{equation}
        \Omega_k^\varepsilon = \{ H^1(\R, \R^2 \setminus \{0\}): u(0) = u(T), \ \deg(u, 0)=k, \ \norm{u(t)}>\varepsilon \ \forall \ t\in[0,T] \}.
    \end{equation}
    Finding the minimum of $\A^\alpha$ on $\Omega_k^\varepsilon$ is therefore an obstacle problem, and $\min_{\overline{\Omega}_k^\varepsilon} \A^\alpha \geq \min_{\overline{\Omega}_k}\A^\alpha$. Let us denote with $u^*$ the minimizer on $\overline{\Omega}_k$ and with $u_\varepsilon$ the minimizer on $\overline{\Omega}_k^\varepsilon$. Let us suppose that $u^*$ has a collision, then $\A^\alpha(u_\varepsilon)$ converges to $\A^\alpha(u^*)$, and $u_\varepsilon$ necessarily touches the border of the disk of radius $\varepsilon$. In fact, if it did not touch the border, then $u_\varepsilon$ would be a smooth solution of the Euler-Lagrange equation, and it would therefore be a minimum of the original problem.

    By the blow-up technique, the sequence of $u_\varepsilon$ converges to $u_0$, where $u_0$ is a parabolic collision-ejection solution. Note that the precessing angle swept by $u_0$ is the limiting value $\pi/(2-\alpha)$. For the obstacle solution $u_\varepsilon$, denote $\theta_\varepsilon^{(i)}$ the angle swept near the $i$-th touch of the $\varepsilon$-disk, then
    \begin{equation}
        \liminf_{\varepsilon\to0}\theta_\varepsilon^{(i)} \geq \frac{2\pi}{2-\alpha}, \quad i=1,\dots,m,
    \end{equation}
    where $m$ is the number of collisions per period in the colliding minimizer. Since $u_\varepsilon \in \overline{\Omega}_k^{\varepsilon}$, then necessarily $\sum_{i=1}^m \theta_\varepsilon^{(i)} \leq 2k\pi$. Taking the $\liminf$ we obtain 
    \[
    m \frac{2\pi}{2-\alpha} \leq \liminf_{\varepsilon\to0} \sum_{i=1}^m \theta_\varepsilon^{(i)} \leq 2|k|\pi,
    \]
    which leads to $m\leq k(2-\alpha)$. Since the collision requires $m\geq 1$, then $|k|(2-\alpha) \geq 1$ and therefore $\alpha \leq 2-1/|k|$. On the other hand, if $\alpha > 2-1/|k|$, then the minimizer is necessarily free of collisions. 
\end{proof}

\begin{theorem}
    For $\alpha \in (1,2)$ and $k\in \Z \setminus \{0\}$, the minimizer of $\A^{\alpha}$ on $\overline{\Omega}_k$ is:
    \begin{itemize}
        \item[i)] the ejection-collision solution for $\alpha \in (1, 2-1/|k|]$;
        \item[ii)] a periodic $q$-precessing solution for $\alpha \in (2-1/|k|, 2-1/|k|^2)$;
        \item[iii)] the $k$-circular solution for $\alpha \in [2-1/|k|^2, 2)$. 
    \end{itemize}
    \label{thm:gordon_general}
\end{theorem}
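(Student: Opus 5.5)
The plan is to prove Theorem~\ref{thm:gordon_general} by exhaustion over the three competing families, using the earlier propositions. By Tonelli, a minimizer $u^*$ exists on $\overline{\Omega}_k$. Any minimizer is either colliding or collision-free. If it is collision-free, it is a smooth $T$-periodic solution of the Euler--Lagrange equation of a central force problem with winding number $k$. Such a solution is therefore either the $k$-circular solution or a bounded non-circular closed orbit. A bounded non-circular closed orbit is a $q$-precessing motion for some $q\ge1$ with $q\Phi=\pi|k|$. A colliding minimizer must instead be the collision-ejection solution: by the usual argument (Gordon, Venturelli), a colliding minimizer with an arbitrary number of collisions can be replaced by the ejection-collision solution of lower or equal action. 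So the only competitors are these three families, and in each range of $\alpha$ I will eliminate all but one.

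\emph{Case i)}, $\alpha\in(1,2-1/|k|]$. Here Proposition~\ref{prop:qprec} shows that every $q$-precessing solution has action strictly larger than the collision-ejection solution. For the $k$-circular solution I will use Corollary~\ref{cor:circ_not_min}: since $2-1/|k|<2-1/|k|^2$ for $|k|\ge2$, the whole range lies below the conjugate-point threshold, so the circular solution is a saddle and not a minimizer. The case $k=\pm1$ is Venturelli's theorem, and there the interval is empty. Hence the minimizer is the collision-ejection solution. At the endpoint $\alpha=2-1/|k|$ a precessing solution with $\Phi=\pi|k|$ would need $c\to0$, i.e.\ it degenerates into the collision orbit, which is consistent.

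\emph{Case iii)}, $\alpha\in[2-1/|k|^2,2)$. Since $\alpha>2-1/|k|$, Proposition~\ref{prop:nocoll} gives that the minimizer is collision-free. By the angle bounds of Section~3, $\pi/\sqrt{2-\alpha}\le\Phi<\pi/(2-\alpha)$. A $q$-precessing orbit with winding $k$ needs $\Phi=\pi|k|/q$, and for $q\le|k|$ this is impossible once $\alpha\ge 2-q^2/|k|^2$. I need care with large $q$: I would argue that $q>|k|$ forces $\Phi<\pi$, which is incompatible with $\Phi\ge\pi/\sqrt{2-\alpha}>\pi$. So no precessing solutions exist, and the collision-free minimizer must be the $k$-circular one.

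\emph{Case ii)}, the interior range. Proposition~\ref{prop:nocoll} again excludes collisions, and Corollary~\ref{cor:circ_not_min} excludes the circular solution. The minimizer is therefore a non-circular bounded closed orbit, i.e.\ a $q$-precessing solution, whose existence in this range is guaranteed by Proposition~\ref{prop:k-precessing-existence} and Remark~\ref{rmk:q=1}.

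The main obstacle is the opening reduction: justifying that collision-free critical points in $\Omega_k$ are exactly circular or precessing orbits, and that a colliding minimizer can be taken to be the collision-ejection solution. Proposition~\ref{prop:nocoll} only bounds the number of collisions and does not identify the colliding minimizer. I would handle this by first replacing a minimizer with $m$ collisions by one with a single collision, using subadditivity of the reduced period. I would then apply Proposition~\ref{prop:qprec}-type comparisons with $\sigma(0)$.
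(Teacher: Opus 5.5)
Your proposal is correct and follows essentially the same route as the paper: you exhaust the three candidate families (collision-ejection, $q$-precessing, $k$-circular) and eliminate all but one in each range of $\alpha$ via Corollary~\ref{cor:circ_not_min}, Proposition~\ref{prop:qprec} and Proposition~\ref{prop:nocoll}. Your deviations are refinements. In case i) you apply Proposition~\ref{prop:qprec} to all $q\ge 1$ rather than invoking non-existence of $1$-precessing orbits; in case iii) you derive non-existence of precessing orbits directly from the apsidal-angle bounds, whereas Proposition~\ref{prop:k-precessing-existence} is only an existence statement; and you state explicitly the reductions the paper uses tacitly, namely that collision-free critical points are circular or precessing and that a colliding minimizer is the collision-ejection solution.
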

\begin{proof}
    \emph{i)} Because of Corollary~\ref{cor:circ_not_min}, the $k$-circular motion is not a minimizer of the action. From Proposition~\ref{prop:k-precessing-existence}, $1$-precessing solutions do not exist in this interval of $\alpha$ values. Because of Proposition~\ref{prop:qprec}, the $q$-precessing solutions with $q \geq 2$ are not minimizer either, since their action is larger than the action of the collision-ejection solution. Therefore there are no other competitors which are smooth solution of the Euler-Lagrange equation, hence the only possibility left is that the minimizer is the ejection-collision solution. 
    
    \emph{ii)} From Proposition~\ref{prop:nocoll}, the minimizer is collision free, therefore it is a smooth solution of the Euler-Lagrange equation. From Corollary~\ref{cor:circ_not_min}, the $k$-circular solution is not a minimizer. Therefore, the only possibility is that the minimizer is a periodic $q$-precessing solution. 
    
    \emph{iii)} From Proposition~\ref{prop:nocoll}, the minimizer is collision free, therefore it is a smooth solution of the Euler-Lagrange equation. Because of Proposition~\ref{prop:k-precessing-existence}, periodic $q$-precessing motions do not exist for $\alpha > 2-1/|k|^2$, therefore the only possibility is that the $k$-circular solution is the minimizer.
\end{proof}

\begin{remark}
    By Remark~\ref{rmk:q=1}, for $|k|=2,3,4$ the $1$-precessing periodic motion is the only minimizer in case \emph{ii)} of Theorem~\ref{thm:gordon_general}. 
    For $|k|\geq 5$, $q$-precessing periodic motions for $q\geq2$ also exist, and therefore compete among the possible minimizers.
    \label{rmk:open}
\end{remark}

\section{Numerical example for $k=2$}
\label{s:numeric}
We verified numerically Theorem~\ref{thm:gordon_general} for $k=2$. The action of the $2$-circular solution is expressed by an analytical formula, while the action of the ejection-collision solution is computed by numerical quadrature (see \cite{venturelli-2002}). On the other hand, the 1-precessing solution for $\alpha \in (3/2, 7/4)$ has to be found numerically. We therefore followed the approach presented in e.g. \cite{fenucci-gronchi_2018, fenucci-gronchi_2023}. We first represented periodic loops with truncated Fourier series, and applied a gradient descent method \cite{simo_2000, simo_2001} to get close to the minimizer for $\alpha = 1.7$. This is value is chosen so that the 1-precessing solution is far enough from the collision, for which the gradient descent does not perform well, and far enough from the circular orbit as well. The minimizing loop is then refined using a shooting method to find the precise initial condition of the 1-precessing orbit with period $T=2\pi$. Both the gradient descent and the shooting method are implemented in polar coordinates, so that the winding number can be easily fixed. Once the solution for $\alpha=1.7$ was found, we used a continuation method (see e.g. \cite{fenucci-jorba_2020}) to find the 1-precessing solution on the whole interval $\alpha \in (3/2, 7/4)$. The action is also computed by numerical quadrature. Figure~\ref{fig:minaction} shows the action of the three different type of solution. Note that the continuation method failed when $\alpha$ approached the extrema values of 3/2 and 7/4, indicating that the 1-precessing solution does not exist outside of the interval. The figure shows that the 1-precessing solution is the minimum over the whole interval $(3/2, 7/4)$, the collision-ejection solution is the minimum for $\alpha\leq3/2$, and the 2-circular solution is the minimum for $\alpha \geq7/4$, as stated in Theorem~\ref{thm:gordon_general}. 
\begin{figure}
    \centering
    \includegraphics[width=0.9\linewidth]{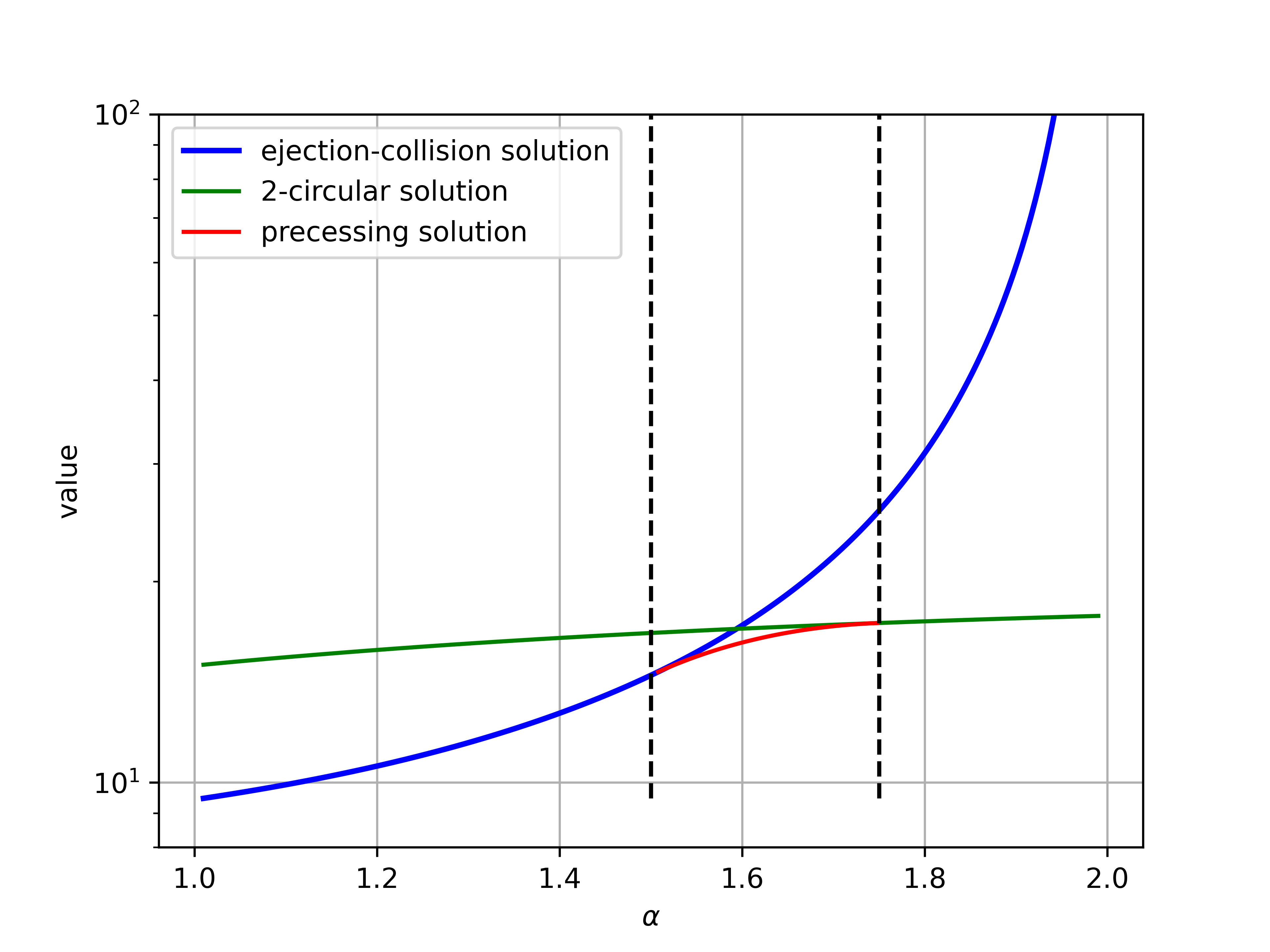}
    \caption{Action of the three competitors for the minimum, for $k=2$ and $T=2\pi$. In blue, the action of the ejection-collision solution. In green, the action of the $2$-circular solution. In red, the action of the precessing orbit. The vertical dashed lines correspond to $\alpha=3/2$ and $\alpha=7/4$.}
    \label{fig:minaction}
\end{figure}

Figure~\ref{fig:solutions} shows the solutions obtained by the shooting method for some values of $\alpha \in (3/2, 7/4)$. Note that for $\alpha$ approaching $3/2$, the 1-precessing solution gets stretched in one direction, and the precessing at pericenter is not even noticeable in a graphical representation because it gets very close to the singularity. This indicates that the solution tends to the ejection-collision solution. As $\alpha$ increases, the minimum radius becomes larger and, as $\alpha$ approaches the value of 7/4 it tends to the $2$-circular solution.
\begin{figure}
    \centering
    \includegraphics[width=0.9\linewidth]{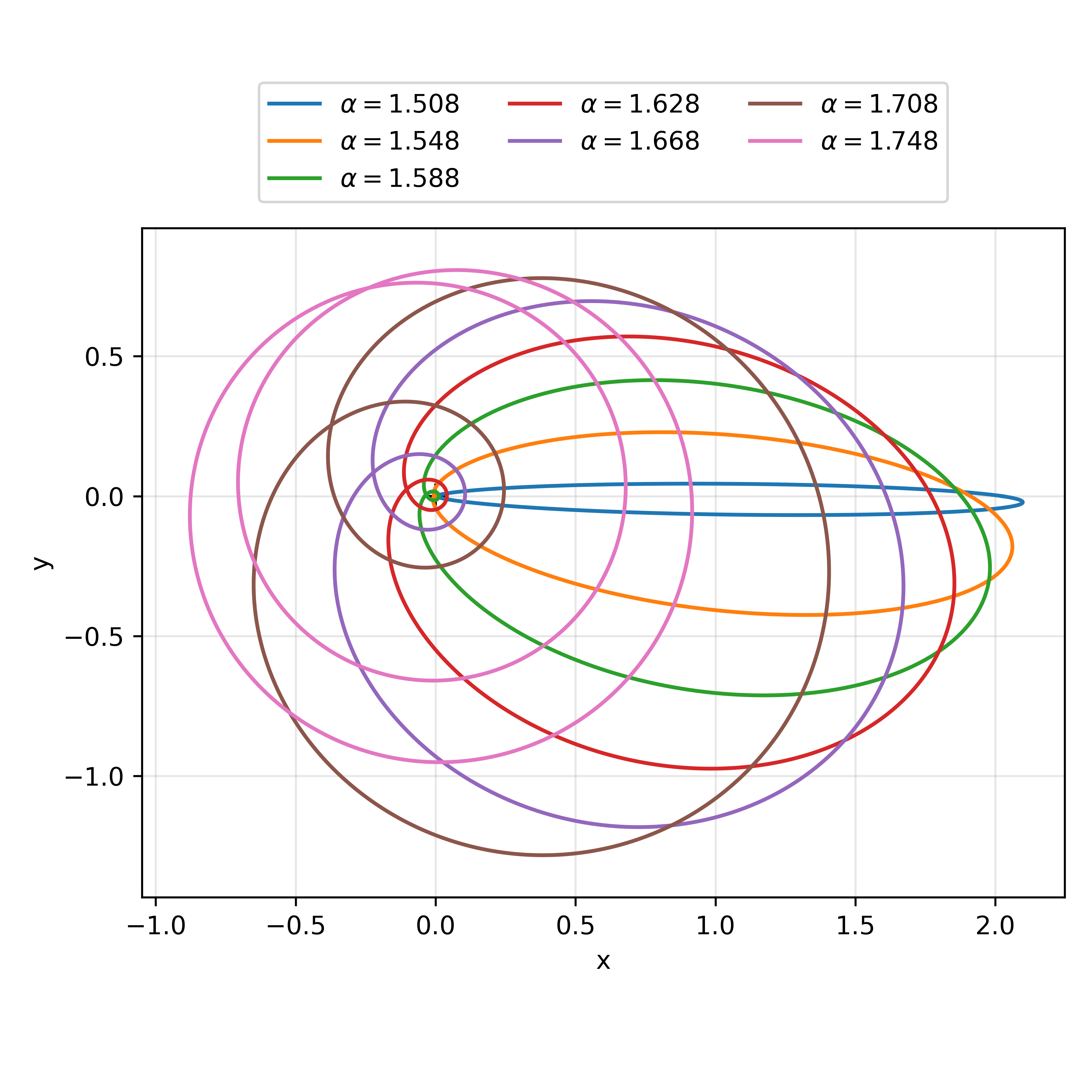}
    \caption{Precessing solutions for $k=2$ with period $T=2\pi$, obtained by the shooting method and numerical continuation.}
    \label{fig:solutions}
\end{figure}

\section{Conclusions}

We have completed the classification of minimizers of the Keplerian action functional with homogeneous potential of type $1/|u|^\alpha$, for $\alpha \in (1,2)$ and arbitrary winding number $k \in \Z \setminus \{0\}$, thereby closing the case left open by Gordon and Venturelli. Depending on the position of $\alpha$ relative to the two thresholds $2-1/|k|$ and $2-1/|k|^2$, the minimizer is either the collision-ejection solution, a periodic precessing solution, or the $k$-circular solution.

Some questions remain open. For $|k| \geq 5$, Remark~\ref{rmk:open} shows that several families of $q$-precessing solutions with $q \geq 2$ coexist in part of the intermediate range. Our results do not identify which of these competing families is the actual minimizer, nor rule out the possibility that the minimizer changes from one family to another as $\alpha$ varies within the range, although it is reasonable to conjecture that the $1$-precessing orbits are always the minimizers. Resolving this would complete the classification at the level of individual solutions rather than just the three broad classes considered here. It would also be of interest to extend the numerical study of Section~\ref{s:numeric} beyond $k=2$, to check whether the qualitative picture of Figure~\ref{fig:minaction} persists once several precessing families compete for the minimum. 

\section*{Declarations}
\noindent\textbf{Funding.} The author declares that no funds, grants, or other support were received during the preparation of this manuscript.

\noindent\textbf{Competing interests.} The author has no relevant financial or non-financial interests to disclose.

\noindent\textbf{Author contributions.} M. F. is solely responsible for the conceptualization, analysis, and writing of this manuscript.

\noindent\textbf{Data availability.} Data sharing is not applicable to this article, as no datasets were generated or analysed during the current study.

\bibliographystyle{amsplain}
\bibliography{sample}

\end{document}